\DeclareMathOperator{\atantwo}{atan2}
\newtheorem{theorem}{Theorem}
\newtheorem{assumption}{Assumption}
\newtheorem{definition}{Definition}
\title{Navigation in Unknown Environments Using Safety Velocity Cones}
\author{Soulaimane Berkane,~\IEEEmembership{Member,~IEEE.}\thanks{This research work is supported by the National Research Council of Canada, grant $n^{o}$: NSERC-DG RGPIN-2020-04759. The author is with the Department of Computer Science and Engineering, University   of   Quebec   in   Outaouais, 101 St-Jean Bosco, Gatineau, QC, J8X 3X7, Canada.  Email: \href{mailto:soulaimane.berkane@uqo.ca}{soulaimane.berkane@uqo.ca}.}}
\date{December 2019}
\begin{document}

\maketitle
\begin{abstract}
    We rely on Nagumo's invariance theorem to develop a new approach for navigation in unknown environments of arbitrary dimension. The idea consists in projecting the nominal velocities (that would drive the robot to the target in the absence of obstacles) onto Bouligand's tangent cones (referred to as the {\it safety velocity cones}) when the robot is close to the boundary of the free space. The proposed projection-based controller is explicitly constructed to guarantee safety and convergence to a set of Lebesgue measure zero that contains the target. For specific free spaces such as Euclidean sphere worlds, the convergence to the target is guaranteed from almost all initial conditions in the free space. We provide a version of the controller (generating a continuous and piecewise smooth closed-loop vector field) relying on the robot's current position and local range measurements (e.g., from LiDAR or stereo vision) without global prior knowledge about the environment.
\end{abstract}
\section{Introduction}
Obstacle avoidance is a long-lasting problem with a large body of research work in the robotics and control communities. Existing approaches to the obstacle avoidance problem can be classified into two main categories: global (map-based) methods that require the global knowledge of the environment and local (reactive) methods that use only local knowledge of the obstacles in the vicinity of the robot. Among the global methods, artificial potential fields-based approaches \cite{Khatib1986Real-timeRobots,Koditschek1990RobotBoundary} are computationally efficient and come usually with rigorous proofs for safety and convergence. However, the design of such potential fields is non-trivial for generic environments due to the complications that arise from the presence of local minima. 
The navigation functions of Koditscheck and Rimon \cite{Koditschek1990RobotBoundary} are carefully constructed, for Euclidean sphere worlds \cite[p. 414]{Koditschek1990RobotBoundary}, to get an artificial potential field with the nice property that all but one of the critical points are saddles with the remaining critical point being the desired reference. Since then, the navigation function-based approach has been extended in many different directions; e.g., for multi-agent systems \cite{Tanner2005FormationFunctions,Dimarogonas2006TotallyFunctions,Roussos2013DecentralizedRobots}, and for focally admissible obstacles \cite{Filippidis2013NavigationSurfaces}. The major drawback of navigation functions is that they require an unknown tuning of parameters to eliminate local minima. Other recent global approaches that do not require any parameter tuning to eliminate local minima are the navigation transform \cite{Loizou2017TheTransformation}, the prescribed performance control \cite{Vrohidis2018PrescribedNavigation}, and the hybrid control approach \cite{Berkane2019} which eliminates also the saddle points.

On the other hand, having a reactive solution to safely navigate in unknown environments is more desirable in practical autonomous robots applications. One of the simplest reactive motion planning algorithms are the family of Bug algorithms \cite{Lumelsky1986DynamicEnvironment,Lumelsky1990IncorporatingFunction} used to navigate in planar environments.  Locally computable navigation functions have been proposed in  \cite{Lionis2007LocallyWorlds} to navigate in unknown sphere worlds and adjustable navigation functions are proposed to gradually update the tuning parameter upon the discovery of new obstacles \cite{Filippidis2011AdjustableWorlds}. On the other hand, {\it purely reactive} algorithms use only current sensor data to generate the safe trajectories and control law. Potential fields can be generated in real-time using current sensor readings. However, purely reactive gradient-following potential field approaches always run the risk of getting stuck in local minima \cite{Choset2005PrinciplesImplementations}. Recently, a new reactive control algorithm has been developed to navigate in environments cluttered with unknown but sufficiently separated and strongly convex obstacles \cite{Arslan2019Sensor-basedWorlds}. Moreover, for planar navigation, the algorithm is implementable using practical sensing models. 
\subsection{Contributions of the Paper}
In this paper we address the problem of robot navigation in an arbitrary unknown environment by constructing a vector field that simultaneously solves the motion planning and control problems. The free space of the robot is assumed to be a closed subset of the $n-$dimensional Euclidean space. We then exploit Nagumo's theorem for invariant sets \cite{Blanchini2008Set-theoreticControl} to guarantee safety of the robot in this environment by projecting the nominal vector field (nominal velocity of the robot) onto the (Bouligand's) tangent cones \cite{Bouligand1932IntroductionDirecte} whenever the robot is close to the boundary. When the boundary of the free space is a smooth submanifold of codimension equals one, the tangent cones are {\it flat} (thus convex) and continuously varying along the boundary.  The resulting vector field is piecewise continuously differentiable and the unique Fillipov solution is shown to converge  to a set of Lebegue measure zero containing the desired exponentially stable location (target) as well as potentially other undesired equilibrium configurations. For free spaces that are modelled as {\it sphere worlds} \cite{Koditschek1990RobotBoundary}, we show that the resulting vector field has the same capabilities as navigation functions without requiring the assumption of global prior knowledge. The undesired equilibrium configurations are explicitly characterized and shown to be unstable (hence almost global asymptotic stability of the target). 

For practical implementation, we show that the proposed algorithm is intrinsically sensor-based in the sense that the vector field can be generated on the fly in real-time using onboard sensors such as LiDARs, stereo cameras,etc; depending on the application at hand. This is achieved by characterizing our controller using the distance function to the boundary of the free space. Moreover, we propose a smoothed version of the control law that removes the discontinuity of the vector field while adding a safety maneuvering margin without compromising the obtained navigation capabilities. 
\subsection{Organization of the Paper and Notation}
This paper is organized as follows. In Section \ref{section:nagumo} we recall the well known Nagumo's theorem for invariant sets, which is stated using the concept of tangent cones. Section \ref{section:main} presents the main theoretical findings of this paper, which are based on a newly proposed projection-based control strategy for safe navigation. In Section \ref{section:distance} we characterize our control law using the distance to the obstacles function which allows also to smooth-out the discontinuouty of the controller. In Section \ref{section:practical}, we discuss some of the practical considerations related to the sensor-based implementation of the proposed navigation controller. Simulation results in a 2D environment are provided in Section \ref{section:simulation} to demonstrate the effectiveness of the proposed navigation algorithm. Finally, we provide in Section \ref{section:conclusion} some concluding remarks and related future work.

{\bf Notation:}
$\mathbb{R}$ and $\mathbb{R}_{<0}$ denote, respectively, the set of reals and negative reals. $\mathbb{R}^n$ is the $n$-dimensional Euclidean space and $\mathbb{S}^{n-1}$ is the $(n-1)$-dimensional unit sphere embedded in $\mathbb{R}^n$. The topological interior (resp. boundary) of a subset $\mathcal{A}\subset\mathbb{R}^n$ is denoted by $\mathbf{int}(\mathcal{A})$ (resp. $\partial\mathcal{A}$). The complement of $\mathcal{A}$ in $\mathbb{R}^n$ is denoted by $\complement\mathcal{A}$ and its closure by $\overline{\mathcal{A}}$. The Euclidean norm of $x\in\mathbb{R}^n$ is defined as $\|x\|:=\sqrt{x^\top x}$ where $(\cdot)^\top$ stands for the transpose operator. We denote by $\mathcal{B}(x,r):=\{y\in\mathbb{R}^n: \|x-y\|<r\}$ the open ball of radius $r$ that is centered at $x$. Given a non-empty subset $\mathcal{A}\subset\mathbb{R}^n$, the distance function from a point $x\in\mathbb{R}^n$ to $\mathcal{A}$ is defined as $\mathbf{d}_{\mathcal{A}}(x):=\mathrm{inf}_{y\in\mathcal{A}}\|y-x\|$ and the projection of $x$ onto $\overline{\mathcal{A}}$ is given by the set-valued map  $\mathbf{P}_{\mathcal{A}}(x):=\{y\in\overline{\mathcal{A}}: \|y-x\|=\mathbf{d}_{\mathcal{A}}(x)\}$. 

\section{Nagumo’s theorem for invariant sets}\label{section:nagumo}
Nagumo’s theorem is one of the main important tools used in the characterization of positively invariant sets for continuous-time dynamical systems. There are different ways to state the theorem but the most intuitive and easy way to do so is to use the notion of {\it tangent cones}. There are different definitions of tangent cones (giving rise to different cones) such as Clarke \cite{Clarke1990OptimizationAnalysis}, Bony \cite{Bony1969PrincipeDegeneres}, and Bouligand \cite{Bouligand1932IntroductionDirecte}. These cones coincide for a convex set, but they can differ on more general sets. Here we recall the definition of (Bouligand's) tangent cones which will be used to state Nagumo's theorem.
\begin{definition}[Bouligand's tangent cone \cite{Bouligand1932IntroductionDirecte,Blanchini2008Set-theoreticControl}]
Given a closed set $\mathcal{X}\subseteq\mathbb{R}^n$, the tangent cone to $\mathcal{X}$ at $x$ is the subset of $\mathbb{R}^n$ defined by
 \begin{align}
      \mathbf{T}_{\mathcal{X}}(x):=\left\{z: \liminf_{\tau\to 0^+}\frac{\mathbf{d}_{\mathcal{X}}(x+\tau z)}{\tau}=0\right\}.
 \end{align}
\end{definition}%
Roughly speaking, the tangent cone for $x\in\mathcal{X}$ is the set that contains all the vectors whose directions point from $x$ inside or tangent to the set $\mathcal{X}$. It should be noted that the tangent cone $\mathbf{T}_{\mathcal{X}}(x)$ is closed and non-trivial only at the boundary of $\mathcal{X}$. In fact, for all $x\in\mathbf{int}(\mathcal{X})$ (points in the interior of $\mathcal{X}$), we have $\mathbf{T}_{\mathcal{X}}(x)=\mathbb{R}^n$, and for all $x\notin\mathcal{X}$ we have $\mathbf{T}_{\mathcal{X}}(x)=\emptyset$. Now we recall Nagumo’s invariance theorem.
\begin{theorem}[Nagumo 1942 \cite{Nagumo1942UberDifferentialgleichungen,Blanchini2008Set-theoreticControl}]\label{theorem:nagumo}
Consider the system $\dot x(t)=f(x(t))$ and assume that for each initial condition $x(0)$ in an open set $\mathcal{O}$ it admits a unique solution defined for all $t\geq 0$. Then the closed set $\mathcal{X}\subset\mathcal{O}$ is forward (positively) invariant if and only if
\begin{equation}\label{eq:nagumo}
    f(x)\in\mathbf{T}_{\mathcal{X}}(x),\quad\forall x\in\mathcal{X}.
\end{equation}
\end{theorem}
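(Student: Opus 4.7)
The plan is to prove the two implications separately, using the distance function $\mathbf{d}_{\mathcal{X}}$ as the primary analytical object in both directions.

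For necessity (invariance $\Rightarrow$ tangency), I would fix $x\in\mathcal{X}$ and consider the solution $\varphi_t(x)$ starting at $x$. Because $f$ is continuous and the solution is (right) differentiable at $t=0$, I can write $\varphi_\tau(x)=x+\tau f(x)+o(\tau)$ for $\tau\to 0^{+}$. Forward invariance gives $\varphi_\tau(x)\in\mathcal{X}$, so $\mathbf{d}_{\mathcal{X}}(x+\tau f(x))\le \|\varphi_\tau(x)-(x+\tau f(x))\|=o(\tau)$. Dividing by $\tau$ and taking $\liminf_{\tau\to 0^{+}}$ gives $0$, so $f(x)\in\mathbf{T}_{\mathcal{X}}(x)$ by the definition of Bouligand's cone.

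For sufficiency (tangency $\Rightarrow$ invariance), my strategy is to control $V(t):=\mathbf{d}_{\mathcal{X}}(x(t))$ along trajectories by a differential inequality and then invoke Gronwall. Fix $t$ and let $p\in\mathbf{P}_{\mathcal{X}}(x(t))$, so that $V(t)=\|x(t)-p\|$. Since $f(p)\in\mathbf{T}_{\mathcal{X}}(p)$, there exists a sequence $h_k\to 0^{+}$ and points $q_k\in\mathcal{X}$ with $\|p+h_k f(p)-q_k\|=o(h_k)$. Using $x(t+h)=x(t)+h f(x(t))+o(h)$ together with local Lipschitzness of $f$ (with constant $L$ on the relevant compact set), the triangle inequality yields
\begin{equation}
\|x(t+h_k)-q_k\|\le(1+Lh_k)V(t)+o(h_k),
\end{equation}
so $V(t+h_k)\le(1+Lh_k)V(t)+o(h_k)$ along the sequence $h_k$. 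This shows that the \emph{lower} right Dini derivative of $V$ satisfies $D_{+}V(t)\le LV(t)$ for every $t$ in the maximal interval of existence. Since $V$ is continuous, nonnegative, and $V(0)=0$, a standard comparison/Gronwall argument forces $V\equiv 0$, and closedness of $\mathcal{X}$ then gives $x(t)\in\mathcal{X}$ for all $t\ge 0$.

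The delicate step I expect to be the main obstacle is the sufficiency direction, specifically the passage from the $\liminf$ in the definition of $\mathbf{T}_{\mathcal{X}}$ to a genuine differential inequality for $V$. The subtlety is that the tangent cone condition at the projected point $p$ gives only a sequence $h_k$ along which $\mathbf{d}_{\mathcal{X}}(p+h_k f(p))/h_k\to 0$, not a uniform estimate; one must therefore work with the lower Dini derivative rather than the upper one, and verify that the chosen comparison principle still applies to a continuous nonnegative function with $D_{+}V\le LV$ and $V(0)=0$. Local Lipschitzness of $f$ (which follows from the assumed uniqueness of solutions together with continuity on a neighbourhood of the trajectory, or may be assumed outright) is also essential to absorb the cross term $\|f(x(t))-f(p)\|\le L V(t)$ in the estimate above.
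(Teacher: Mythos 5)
The paper does not prove this theorem: it is recalled as a classical result with citations to Nagumo's 1942 paper and to Blanchini's survey, so there is no in-paper argument to compare against. Your necessity direction is correct and standard: right-differentiability of the flow at $t=0$ gives $\varphi_\tau(x)=x+\tau f(x)+o(\tau)$, and forward invariance converts this into $\mathbf{d}_{\mathcal{X}}(x+\tau f(x))=o(\tau)$, which is exactly membership in Bouligand's cone. Your sufficiency direction is the standard \emph{Lipschitz-case} argument, and the delicate step you flag does in fact go through: from $V(t+h_k)\le(1+Lh_k)V(t)+o(h_k)$ along a sequence $h_k\to 0^+$ you only control the lower right Dini derivative, but the monotonicity lemma for Dini derivatives (a continuous function whose lower right Dini derivative is everywhere $\le 0$ is non-increasing; apply it to $W(t)=e^{-Lt}V(t)$, for which $D_+W\le 0$) is valid, so $V\equiv 0$ follows from $V(0)=0$ and $V\ge 0$.

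The genuine gap is the local Lipschitz hypothesis itself. The theorem as stated assumes only (implicit) continuity of $f$ together with existence and uniqueness of solutions, and your claim that local Lipschitzness ``follows from the assumed uniqueness of solutions together with continuity'' is false: $f(x)=-\operatorname{sign}(x)\sqrt{|x|}$ on $\mathbb{R}$ is continuous, has unique forward-in-time solutions, and is not Lipschitz at the origin. Without a Lipschitz bound you cannot absorb the cross term $\|f(x(t))-f(p)\|$ into $LV(t)$, and the entire Gronwall scheme collapses. To prove the theorem under the stated hypotheses one needs Nagumo's original route: an Euler-polygon (tangential approximation) construction showing that the tangency condition yields \emph{existence} of at least one solution remaining in $\mathcal{X}$ (viability), after which the assumed uniqueness upgrades this to forward invariance of the unique solution. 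For the purposes of this paper the restriction is harmless, since the vector fields to which the theorem is applied are piecewise $C^1$; but as a proof of the theorem as stated, your argument covers only the locally Lipschitz case, and the fallback justification you offer for that extra hypothesis is incorrect and should either be dropped in favour of stating Lipschitzness as an assumption or replaced by the polygonal construction.
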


Nagumo's theorem has a very intuitive and simple geometric interpretation. In fact, condition \eqref{eq:nagumo} says that the vector field $f(x)$ (velocity of $x$) must point inside (or it is tangent to) the set $\mathcal{X}$ at each position $x$. As discussed above, since $\mathbf{T}_{\mathcal{X}}(x)$ is non-trivial only at the boundary points $x\in\partial\mathcal{X}$, Nagumo's condition \eqref{eq:nagumo} is checked only at these boundary points since it is trivially satisfied for all $x\in\mathbf{int}(\mathcal{X})$.
\begin{figure}
    \centering
    \includegraphics[width=0.9\columnwidth]{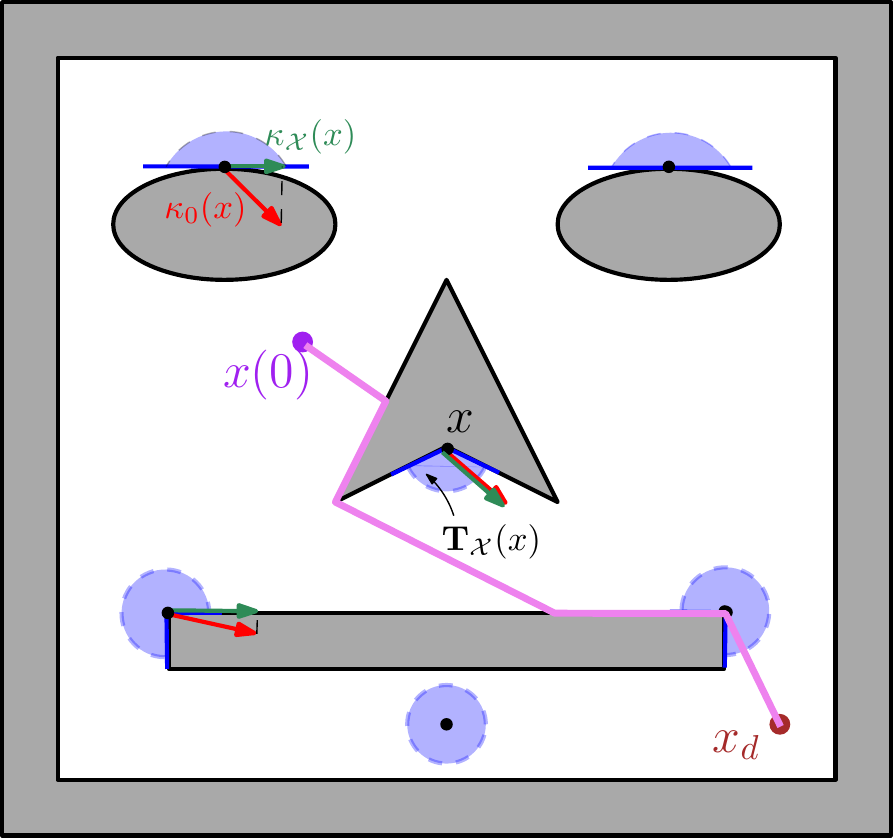}
    \caption{A face-shaped free space $\mathcal{X}$ (interior in white, boundary in solid black). At each position $x$, the nominal controller $\kappa_0(x)$ (in bold red) is projected onto the safety velocity cone (in blue) to give rise to the safe control $\kappa_\mathcal{X}(x)$ of \eqref{eq:dx-set} (in bold green). The resulting piecewise smooth vector field (in bold pink) is an orchestration of two modes: stabilization and avoidance.}
    \label{fig:nagumo}
\end{figure}

\section{Projection onto Tangent cones for safe navigation}\label{section:main}
In this section, we propose a simple and very efficient safe navigation approach that is based on Nagumo's theorem and tangent cones. The proposed approach is intrinsically local (does not require prior global knowledge of the environment) and considers a very generic class of free spaces.
\subsection{Problem Formulation}
We consider a point-mass robot moving in a closed (not necessary bounded) subset $\mathcal{X}$ of the $n$-dimensional Euclidean space $\mathbb{R}^n$. The set $\mathcal{X}$ is often called the obstacle-free set or the free space. The robot is moving according to the following first-order dynamics (fully actuated single integrator):
\begin{align}\label{eq:dx1}
    \dot x=u,
\end{align}
where $x\in\mathbb{R}^n$ is the position of the robot, initially at $x(0)\in\mathcal{X}$, and $u\in\mathbb{R}^n$ is the velocity (control input). The navigation problem consists in finding a suitable control policy (vector field) $\kappa_\mathcal{X}:\mathcal{X}\to\mathbb{R}^n$ such that, under the control law $u=\kappa_\mathcal{X}(x)$, the robot's position is asymptotically stabilized at some given desired location $x_d\in\mathbf{int}(\mathcal{X})$ while staying in the set $\mathcal{X}$ for all times (forward invariance of $\mathcal{X}$). The resulting closed-loop system is written as
\begin{align}\label{eq:dx2}
    \dot x=\kappa_\mathcal{X}(x).
\end{align}
Moreover, the control policy $\kappa_\mathcal{X}$ must be locally computed in the sense that only the local information about the free space $\mathcal{X}$ is used when generating in real-time the resulting vector field. 
%%%%%55
\subsection{Invariance Through Safety Velocity Cones}
Here we exploit the fact that the notion of safety is tightly related to the notion of invariance. In view of \eqref{eq:dx2} and by direct application of Nagumo's theorem (Theorem \ref{theorem:nagumo}), if we want to ensure the safety specification, the control policy $\kappa_\mathcal{X}(\cdot)$ must satisfy the  condition
\begin{equation}\label{eq:kappa:constraint}
        \kappa_\mathcal{X}(x)\in\mathbf{T}_{\mathcal{X}}(x),\quad\forall x\in\mathcal{X}.
\end{equation}
Therefore, for all $x\in\mathcal{X}$, it is reasonable to choose a control law that solves the following constrained optimization problem
\begin{equation}\label{eq:optimization}
        \min_u \|u-\kappa_0(x)\|^2\quad\textrm{subject to\;}u\in\mathbf{T}_{\mathcal{X}}(x), \forall x\in\mathcal{X},
\end{equation}
where $\kappa_0(x)$ is a nominal control law that ensures  the stabilization task (motion to goal) in the absence of obstacles. In other words, at each point in $\mathcal{X}$, the control law that we would like to apply will be equal to the control that is the {\it closest} one  to the nominal control law under the constraint \eqref{eq:kappa:constraint}. This problem is referred to as the {\it nearest point problem} and the solution to this minimization is given by the {\it metric projection} of $\kappa_0(x)$ onto the set $\mathbf{T}_{\mathcal{X}}(x)$, i.e., 
$
    \mathbf{P}_{\mathbf{T}_{\mathcal{X}}(x)}(\kappa_0(x)).
$
Since $\mathbf{P}_{\mathbf{T}_{\mathcal{X}}(x)}(\cdot)$ is set-valued, the resulting closed-loop system can be written as a differential inclusion
\begin{align}\label{eq:dx-set}
    \dot x=\kappa_\mathcal{X}(x)\in\mathbf{P}_{\mathbf{T}_{\mathcal{X}}(x)}(\kappa_0(x)),
\end{align}
which has (by construction) solutions that are necessarily safe (will stay in $\mathcal{X}$). Since velocities are projected onto the tangent cone $\mathbf{T}_{\mathcal{X}}(x)$ to guarantee safety, we term these tangent cones as {\it safety velocity cones} (SVCs) when used for this navigation problem. A striking feature of this controller is that the SVCs are characterized locally near each point of the boundary and, hence, the proposed navigation scheme is intrinsically a local control strategy that does not require global knowledge of the free space\footnote{In Section \ref{section:practical}, we will address the practical implementation of this control strategy using on-board sensing, which results in a reactive sensor-based navigation scheme.}.  Note that, since  $\mathbf{T}_{\mathcal{X}}(x)=\mathbb{R}^n$ for points in the interior of $\mathcal{X}$, one can writes
\begin{align}\label{eq:dx-set}
    \dot x=\kappa_\mathcal{X}(x)\in\begin{cases}
    \{\kappa_0(x)\},&x\in\mathbf{int}(\mathcal{X})\\
    \mathbf{P}_{\mathbf{T}_{\mathcal{X}}(x)}(\kappa_0(x)),&x\in\partial\mathcal{X},
    \end{cases}
\end{align}
which shows the discontinuous\footnote{See Section \ref{section:distance} for the smooth version of this control law.} nature of the resulting vector field, see Fig.~\ref{fig:nagumo} . 
% In Section \ref{section:distance}, we will propose a smoothing mechanism that generates a continuous vector field instead. 

Note also that, in view of the famous Hilbert project theorem in convex analysis, if the tangent cone $\mathbf{T}_{\mathcal{X}}(x)$ is convex at each $x$, the projection map $\mathbf{P}_{\mathbf{T}_{\mathcal{X}}(x)}(\cdot)$ is single-valued. In this case the differential inclusion \eqref{eq:dx-set} reduces to a differential equation with discontinuous right-hand side.  For example, the SVCs of the face-shaped set in Fig.~\ref{fig:nagumo} are not all convex since at the corners of the mouth (bottom rectangle) the SVCs are not convex. Note that  convexity of the tangent cone does not necessary require convexity nor smoothness of the set $\mathcal{X}$. For instance, if we remove the mouth from the set in Fig.~\ref{fig:nagumo}, the resulting free space has SVCs that are convex everywhere; although the set has a non-smooth boundary and is obviously non-convex. Moreover, arbitrary closed sets with smooth boundaries have tangent cones that are half-spaces (thus convex) and these will be treated in the next subsection. 

Finally, it is worth pointing out that, in the 2D context, the obtained navigation trajectories are similar to those obtained using the traditional Bug planning algorithm \cite{Lumelsky1986DynamicEnvironment} where the planner switches between motion-to-goal and boundary-following maneuvers. Our proposed obstacle avoidance strategy extends this to an Euclidean space with arbitrary dimension while also uniting the planning and feedback stabilization tasks, in the same spirit of potential field methods \cite{Koditschek1990RobotBoundary,Khatib1986Real-timeRobots}, and providing theoretical guarantees.  

%%%%%%%%%%%%%%%%%%%%%%%%%%%%%%%%%%%%%%%%%%%%%%%%%5
%%%%%%%%%%%%%%%%%%%%%%%%%%%%%%%%%%%%%%%%%%%%%%%%%%
\subsection{Free Spaces with Smooth Boundaries}\label{section:main-smooth}
Here we focus our attention to free spaces with smooth boundaries. This is motivated by the distance-based characterization wich will be presented in Section \ref{section:distance} for positive reach sets, which intrinsically generate practical free spaces that have smooth boundaries.  We assume, hereafter, that $\partial\mathcal{X}$ is a hypersurface of $\mathbb{R}^n$ that is {\it orientable\footnote{An orientable hypersurface is an $(n-1)-$dimensional submanifold of $\mathbb{R}^n$ that admist a global unit normal vector field defined along it \cite[Definition 4.14]{Lee2009ManifoldsGeometry}.}, without boundary, and twice continuously differentiable}. Therefore, there exists a continuously differentiable map (called also the Gauss map)  $\nu:\partial\mathcal{X}\to\mathbb{S}^{n-1}$ that associates to each $x\in\partial\mathcal{X}$ the (outward) normal unit vector $\nu(x)$ to the hypersurface $\partial\mathcal{X}$ at $x$. The tangent cone at any $x\in\partial\mathcal{X}$ is, therefore, given by the half-space
\begin{align}\label{eq:T:smooth}
    \mathbf{T}_{\mathcal{X}}(x)=\big\{z\in\mathbb{R}^n: \nu(x)^\top z\leq 0\big\},\quad \forall x\in\partial\mathcal{X}.
\end{align}
Since $\mathbf{T}_{\mathcal{X}}(x)$ is a half-space it is convex and thus the solution of the nearest point problem in \eqref{eq:optimization} is unique and can be explicitly solved. Let us derive the explicit expression of this projection map. If $x\in\partial\mathcal{X}$ and $\nu(x)^\top\kappa_0(x)\leq 0$ then $\kappa_0(x)\in\mathbf{T}_{\mathcal{X}}(x)$ and hence we have $\kappa_0(x)$ is a solution to \eqref{eq:optimization}. Otherwise when $x\in\partial\mathcal{X}$ and $\nu(x)^\top\kappa_0(x)>0$, the closest point is obtained via the orthogonal projection onto the hyperplane $\nu(x)^\top z=0$, which is given by \cite{Meyer2000MatrixAlgebra}
\begin{align}
    \Pi(\nu(x))\kappa_0(x):=(I_n-\nu(x)\nu(x)^\top)\kappa_0(x).
\end{align}
% Let us denote the projection map by the mapping $\mathbf{P}_x^{\mathcal{X}}:\mathbb{R}^n\to \mathbf{T}_{\mathcal{X}}(x)$ which takes any vector $\xi\in\mathbb{R}^n$ and projects it onto the set $\mathbf{T}_{\mathcal{X}}(x)$.
Therefore, the projection of $\kappa_0(x)$ onto $\mathbf{T}_{\mathcal{X}}(x)$ is given by
% \begin{align}
%     \mathbf{P}(\nu(x),\xi):=\begin{cases}
%     \xi&\nu(x)^\top\xi\leq 0,\\
%     \Pi(\nu(x))\xi&\nu(x)^\top\xi\geq0.
%     \end{cases}
% \end{align}
% To sum up, will pick the following (piecewise continuous) control law to solve our navigation problem 
\begin{align}\label{eq:kappa:projection}
     \kappa_\mathcal{X}(x)&=\nonumber
    % =\begin{cases}
    % \kappa_0(x)&x\in\mathbf{int}(\mathcal{X}),\\
    % \mathbf{P}(\nu(x),\kappa_0(x))&x\in\partial\mathcal{X},
    % \end{cases}\\
    \mathbf{P}_{\mathbf{T}_{\mathcal{X}}(x)}(\kappa_0(x))\\
    &=\begin{cases}
    \kappa_0(x),&x\in\mathbf{int}(\mathcal{X})\;\mathrm{or}\;\\
    &\nu(x)^\top\kappa_0(x)\leq 0,\\
    \Pi(\nu(x))\kappa_0(x),&x\in\partial\mathcal{X}\;\mathrm{and}\;\\
    &\nu(x)^\top\kappa_0(x)\geq 0,
    \end{cases}
\end{align}
% \begin{figure}
%     \centering
%     \includegraphics[width=\columnwidth]{figures/nagumo-condition-2.pdf}
%     \caption{Set with convex tangent cones at each $x$. The nominal controller $\kappa_0(x)$ (in bold red) is projected onto the tangent cone to give rise to the safe control $\kappa_\mathcal{K}(x)$ (in bold green).}
%     \label{fig:nagumo2}
% \end{figure}
which explicitly defines our control policy for free spaces with smooth boundary. Roughly speaking, the resulting (discontinuous) controller applies the nominal control as long as the robot is not at the boundary of the free space (stabilization mode). When the robot is at the boundary of the free space, the nominal controller is projected onto the tangent cone to ensure safety (avoidance mode). Note that during the avoidance mode and if the nominal controller is pointing inside the set $\mathcal{X}$, the projection does not alter the nominal controller (i.e., when $\nu(x)^\top\kappa_0(x)\leq 0$). This is a very intuitive and simple idea that we have formulated rigorously using the projection onto tangent cones. Of course from a practical point of view one needs to consider a safety margin from the boundary of the free space and this will be discussed later in Section \ref{section:distance}. Now we are ready to state our first result.
\begin{theorem}[Safety]\label{theorem:safety}
Consider a free space that is described by a closed set $\mathcal{X}\subset\mathbb{R}^n$ such that $\partial\mathcal{X}$ is an orientable and $C^2-$hypersurface. Consider the kinematic system \eqref{eq:dx1} under the control law \eqref{eq:kappa:projection} where $\kappa_0(\cdot)$ is continuously differentiable. Then, the closed-loop system admits a unique solution (in the sense of Fillipov) for all $t\geq 0$ and the set $\mathcal{X}$ is forward invariant.
\end{theorem}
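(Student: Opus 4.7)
The plan is to treat the discontinuity of $\kappa_\mathcal{X}$, which lies entirely on the subset $\{x\in\partial\mathcal{X}:\nu(x)^\top\kappa_0(x)>0\}$, via Fillipov's regularization and then invoke Nagumo's theorem. Since $\kappa_0\in C^1$ and the Gauss map $\nu\in C^1$ (thanks to $\partial\mathcal{X}$ being $C^2$), both branches in \eqref{eq:kappa:projection} are locally Lipschitz on their respective closed pieces, and the Fillipov set-valued map reduces to the singleton $\{\kappa_0(x)\}$ everywhere except on the discontinuity set, where it equals the line segment
\begin{equation*}
F(x)=\overline{\mathrm{co}}\bigl\{\kappa_0(x),\,\Pi(\nu(x))\kappa_0(x)\bigr\}=\bigl\{\kappa_0(x)-\lambda(\nu(x)^\top\kappa_0(x))\nu(x):\lambda\in[0,1]\bigr\}.
\end{equation*}
This map is upper semi-continuous with nonempty, convex, compact values, so Fillipov's standard existence result yields at least one absolutely continuous solution of $\dot x\in F(x)$.

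Uniqueness would be handled by a case split at a boundary point $x_\star\in\partial\mathcal{X}$. Where $\nu(x_\star)^\top\kappa_0(x_\star)<0$, continuity of $\kappa_\mathcal{X}$ together with its strict inward orientation drives the trajectory into $\mathbf{int}(\mathcal{X})$ immediately, where local Lipschitzness of $\kappa_0$ gives uniqueness. Where $\nu(x_\star)^\top\kappa_0(x_\star)>0$, the inclusion is in classical sliding-mode form: imposing the invariance constraint $\nu(x)^\top v=0$ on elements $v\in F(x)$ picks out precisely $\lambda=1$, so the Fillipov (equivalent-control) velocity is the single tangential element $\Pi(\nu(x))\kappa_0(x)$. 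Because this tangential field is locally Lipschitz on the $C^2$-hypersurface $\partial\mathcal{X}$, the resulting constrained dynamics on $\partial\mathcal{X}$ has a unique solution. Points where $\nu^\top\kappa_0=0$ lie in the closure of both pieces and the two branches agree there, so continuity settles the transition.

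For safety, I would apply Nagumo's theorem (Theorem~\ref{theorem:nagumo}) to $\dot x\in F(x)$. In the interior, $\mathbf{T}_\mathcal{X}(x)=\mathbb{R}^n$ trivially contains $F(x)$. On $\partial\mathcal{X}$, the half-space $\mathbf{T}_\mathcal{X}(x)=\{z:\nu(x)^\top z\leq 0\}$ contains $\Pi(\nu(x))\kappa_0(x)$ by the projection construction and contains $\kappa_0(x)$ whenever $\nu^\top\kappa_0\leq 0$; the realized Fillipov velocity in the sliding regime is again the tangential one, which lies in $\mathbf{T}_\mathcal{X}(x)$. Hence Nagumo's condition holds and $\mathcal{X}$ is forward invariant. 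The main obstacle I anticipate is the sliding-mode step: establishing rigorously that among all admissible convex combinations in $F(x)$, the Fillipov solution concept selects the tangential one (via Utkin's equivalent control or a direct transversality argument resting on $\nu(x)^\top\kappa_0(x)>0$), and verifying that this selection depends Lipschitz-continuously on $x$ along the hypersurface so that uniqueness on $\partial\mathcal{X}$ is actually inherited.
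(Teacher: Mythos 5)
Your proof follows essentially the same route as the paper: establish where the vector field is discontinuous, regularize in the sense of Filippov, obtain existence and uniqueness of the solution, and conclude forward invariance from Nagumo's theorem. The only substantive difference is that where you carry out the sliding-mode/equivalent-control analysis explicitly to get uniqueness on the boundary, the paper simply invokes a uniqueness result for piecewise-continuous vector fields (citing Cortes 2008, Prop.~5); your explicit Filippov set $F(x)=\overline{\mathrm{co}}\{\kappa_0(x),\Pi(\nu(x))\kappa_0(x)\}$ at the discontinuity is exactly the set-valued map the paper writes down later in the proof of Theorem~\ref{theorem:convergence}.
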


\begin{proof}
First we show that the vector field $\kappa_\mathcal{X}(x)$ is piecewise continuous. In fact, $\kappa_\mathcal{X}(x)$ is continuous on $\mathbf{int}(\mathcal{X})$ since  $\kappa_0(x)$ is continuously differentiable. Moreover, for $x\in\partial\mathcal{X}$, we have
\begin{align}
    \kappa_\mathcal{X}(x)
    =\begin{cases}
    \kappa_0(x),&\nu(x)^\top\kappa_0(x)\leq 0,\\
    \Pi(\nu(x))\kappa_0(x),&\nu(x)^\top\kappa_0(x)\geq0,
    \end{cases}
\end{align}
which is continuous since $\nu(x)$ is continuously differentiable and, at $\nu(x)^\top\kappa_0(x)=0$,  it holds that $\Pi(\nu(x))\kappa_0(x)=\kappa_0(x)$. Therefore, $\kappa_\mathcal{X}(x)$ is discontinuous only at the boundary points. Moreover, for each $x\in\partial\mathcal{X}$, the vector field $\kappa_\mathcal{X}(x)$ points naturally into $\mathbf{int}(\mathcal{X})$ since it projects onto the tangent cone when at the boundary points. It follows from \cite[Proposition 5]{Cortes2008DiscontinuousSystems} that there exists a unique Filippov solution starting from any initial condition. Finally since $\kappa_\mathcal{X}(x)\in\mathbf{T}_{\mathcal{X}}(x)$ for all $x\in\mathcal{X}$, it follows from Nagumo's theorem that the set $\mathcal{X}$ is forward invariant. 
\end{proof}
Theorem \ref{theorem:safety} shows that safety is guaranteed regardless of the nominal controller $\kappa_0(x)$ and under a mild condition on free-space $\mathcal{X}$ (smooth boundary). This is an important result since safety is often a hard constraint and should be satisfied regardless of the free space or the control strategy applied. On the other hand, convergence to the target is an objective that it tightly related to the topology of the free space and the chosen $\kappa_0(x)$. Nevertheless, in view of the fact that our controller equals the nominal controller ($u=\kappa_0(x)$) for points in the interior of $\mathcal{X}$ where the target point $x_d$ lies in, there exists a ball around $x_d$ from which convergence to $x_d$ is ensured. However, the region of attraction of the equilibrium $x=x_d$ can be smaller than $\mathcal{X}$, i.e. there exists initial conditions in $\mathcal{X}$ from which the robot will not reach $x_d$. To further characterize the equilibria set of the closed-loop system, we pick for illustration the state feedback 
\begin{align}\label{eq:kappa0}
        \kappa_0(x)=-k(x-x_d),\quad k>0.
\end{align}
Next we state our second result which is related to the convergence task (motion to goal).
\begin{theorem}[Motion to Goal]\label{theorem:convergence}
Consider a free space that is described by a closed set $\mathcal{X}\subset\mathbb{R}^n$ such that $\partial\mathcal{X}$ is an orientable and $C^2-$hypersurface. Consider the kinematic system \eqref{eq:dx1} under the control law \eqref{eq:kappa:projection}  with $\kappa_0(\cdot)$ as in \eqref{eq:kappa0}. Then, the distance $\|x-x_d\|$ is non-increasing, the equilibrium $x=x_d$ is exponentially stable, and the unique Filippov solution converges to the set $\{x_d\}\cup\mathcal{E}$, where $\mathcal{E}:=\{x\in\partial\mathcal{X}: x=x_d+\lambda\nu(x), \lambda\in\mathbb{R}_{<0}\}$.
\end{theorem}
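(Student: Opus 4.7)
The plan is to take $V(x) := \tfrac12\|x-x_d\|^2$ as a Lyapunov-like function, compute $\dot V$ along trajectories in each of the two branches of \eqref{eq:kappa:projection}, and then invoke a nonsmooth invariance principle to identify the attractor. Existence, uniqueness of the Filippov solution and forward invariance of $\mathcal{X}$ are already provided by Theorem \ref{theorem:safety}, so I would start directly from the Lyapunov analysis.

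In the first branch (interior of $\mathcal{X}$, or boundary with $\nu(x)^\top\kappa_0(x)\leq 0$), the controller coincides with $-k(x-x_d)$ and hence $\dot V = -k\|x-x_d\|^2$. In the second branch (boundary with $\nu(x)^\top\kappa_0(x)\geq 0$), the projection formula gives
\[
\dot V \;=\; -k\|x-x_d\|^2 \;+\; k\bigl(\nu(x)^\top(x-x_d)\bigr)^2,
\]
which is non-positive by Cauchy--Schwarz since $\|\nu(x)\|=1$. This immediately yields that $\|x-x_d\|$ is non-increasing, the first claim of the theorem. For local exponential stability of $x_d$, I would use the fact that $x_d\in\mathbf{int}(\mathcal{X})$, so there is an open ball $\mathcal{B}(x_d,r)\subset\mathbf{int}(\mathcal{X})$ on which the closed-loop dynamics reduce to the linear system $\dot x = -k(x-x_d)$; monotonicity of $V$ keeps any trajectory that enters this ball inside it, giving exponential decay.

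For convergence to $\{x_d\}\cup\mathcal{E}$, I would invoke a LaSalle-type invariance principle tailored to Filippov systems, such as the one in \cite{Cortes2008DiscontinuousSystems}. Since the trajectory is bounded (by monotonicity of $V$), its $\omega$-limit set is contained in the largest weakly invariant subset of $\{x\in\mathcal{X}: 0\in \dot V(x)\}$. In the first branch $\dot V=0$ forces $x=x_d$; in the second branch the Cauchy--Schwarz equality case forces $x-x_d = \lambda\nu(x)$ for some $\lambda\in\mathbb{R}$, and the sign condition $\nu(x)^\top\kappa_0(x)\geq 0$ then gives $\lambda\leq 0$, with $\lambda=0$ ruled out because $x_d\in\mathbf{int}(\mathcal{X})$. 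A direct computation shows $\Pi(\nu(x))(x-x_d)=0$ whenever $x-x_d$ is collinear with $\nu(x)$, so every point of $\mathcal{E}$ is already an equilibrium of the closed-loop dynamics, and invariance of $\mathcal{E}$ is therefore automatic.

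The main obstacle I anticipate is not any single computation but the rigorous application of LaSalle's theorem to a system with discontinuous right-hand side: one needs a nonsmooth invariance principle and must check that the Filippov set-valued map has the required regularity (upper semicontinuity with nonempty, convex, compact values) and that $V$ is regular enough for its set-valued derivative along the inclusion to reduce to the pointwise expressions above. Once these ingredients are in place, combining the monotonicity of $V$, the local linear behavior near $x_d$, and the invariance principle yields the three conclusions of the theorem.
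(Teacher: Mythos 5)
Your proposal is correct and follows essentially the same route as the paper: the same Lyapunov function $\mathbf{V}(x)=\tfrac12\|x-x_d\|^2$, the same branch-wise computation showing $\dot{\mathbf{V}}\leq 0$ (your Cauchy--Schwarz bound is equivalent to the paper's positive semi-definiteness argument for $\Pi(\nu(x))$), the same local linear behavior near $x_d$ for exponential stability, and the same appeal to the nonsmooth invariance principle of \cite{Cortes2008DiscontinuousSystems} with the identical characterization of the equality set as $\{x_d\}\cup\mathcal{E}$. The one step you flag as an anticipated obstacle is exactly what the paper supplies: it writes the Filippov set-valued map explicitly as the convex combinations $(\alpha I_n+(1-\alpha)\Pi(\nu(x)))\kappa_0(x)$, $\alpha\in[0,1]$, on the discontinuity set, so that non-positivity of the set-valued Lie derivative follows from your two pointwise estimates by linearity in $\xi$.
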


\begin{proof}
For simplicity let us denote $f(x):=\mathbf{P}_{\mathbf{T}_{\mathcal{X}}(x)}(\kappa_0(x))$. First we construct the differential inclusion that captures the unique Fillipov's solution as follows:
\begin{align}\label{eq:inclusion}
    x(t)\in\mathbf{F}[f](x(t)),
\end{align}
where $\mathbf{F}[f]:\mathbb{R}^n	\rightrightarrows\mathbb{R}^n$ is a set-valued mapping defined by
\begin{align}
    \mathbf{F}[f](x):=\bigcap_{\delta>0}\bigcap_{\mu(S)=0}\overline{\mathrm{co}}(f(\mathcal{B}(x,\delta))\setminus S),
\end{align}
where $\overline{\mathrm{co}}$ denotes the convex closure hull and $\mu$ is the Lebesgue measure. Since $f$ is piecewise continuous and discontinuous only at the boundary points, we can write
\begin{align}\label{eq:F}
    \mathbf{F}[f](x)=\begin{cases}
    \{\kappa_0(x)\},\\
    \qquad x\in\mathbf{int}(\mathcal{X})\;\mathrm{or}\;\nu(x)^\top\kappa_0(x)\leq 0,\\
    \{(\alpha I_n+(1-\alpha)\Pi(\nu(x)))\kappa_0(x),\alpha\in[0,1]\},\\
    \qquad x\in\partial\mathcal{X}\;\mathrm{and}\;\nu(x)^\top\kappa_0(x)\geq 0.
    \end{cases}
\end{align}
In other words, when $x\in\partial\mathcal{X}$ and $\nu(x)^\top\kappa_0(x)\geq 0$, $\mathbf{F}[f](x)$ is the set of all possible convex combinations of $\Pi(\nu(x))\kappa_0(x)$ and $\kappa_0(x)$ while in all other cases it is a singleton. Consider the following continuously differentiable positive definite function
\begin{align}
    \mathbf{V}(x)=\frac{1}{2}\|x-x_d\|^2.
\end{align}
The set-valued Lie derivative of $\mathbf{V}(x)$ along the vector fields of $\mathbf{F}[f]$ is given  by
\begin{align}
    \mathcal{L}_{\mathbf{F}[f]}\mathbf{V}(x)=\{(x-x_d)^\top\xi, \xi\in\mathbf{F}[f](x)\}.
\end{align}
On the other hand, and in view of \eqref{eq:F}, one has for all $\xi\in\mathbf{F}[f](x)$
\begin{align}
    (x-x_d)^\top\xi=\begin{cases}
    -k\|x-x_d\|^2,\\
    \qquad x\in\mathbf{int}(\mathcal{X})\;\mathrm{or}\;\nu(x)^\top\kappa_0(x)\leq 0,\\
    -k\alpha\|x-x_d\|^2-k(1-\alpha)(x-x_d)^\top\\
    \hfill\Pi(\nu(x))(x-x_d),\\
    \qquad x\in\partial\mathcal{X}\;\mathrm{and}\;\nu(x)^\top\kappa_0(x)\geq 0.
    \end{cases}
\end{align}
Now, since the matrix $\alpha I_n+(1-\alpha)\Pi(\nu(x))$ is positive semi-definite (remember that $\Pi(\nu(x))$ is positive semi-definite), one has
\begin{align}
    \max \mathcal{L}_{\mathbf{F}[f]}\mathbf{V}(x)\leq 0, \quad \forall x\in\mathcal{X}.
\end{align}
If follows from \cite[Theorem 1]{Cortes2008DiscontinuousSystems} that $x=x_d$ is a strongly stable equilibrium of \eqref{eq:inclusion}. Moreover, since $x_d\in\mathbf{int}(\mathcal{X})$ there exists $r>0$ such that $\mathcal{B}(x_d,r)\subset\mathbf{int}(\mathcal{X})$ and hence, on this set, the dynamics reduces to $\dot x=-k(x-x_d)$ which shows exponential stability of $x=x_d$. Finally, the points where $0\in\mathcal{L}_{\mathbf{F}[f]}\mathbf{V}(x)$ correspond to either $x=x_d$ or $\nu(x)^\top(x-x_d)< 0$ and $(x-x_d)^\top\Pi(\nu(x))(x-x_d)=0$. The latter implies that $\Pi(\nu(x))(x-x_d)=0$ or equivalently $x-x_d=\lambda\nu(x)$ with $\lambda=\nu(x)^\top(x-x_d)<0$. By \cite[Theorem 2]{Cortes2008DiscontinuousSystems}, the proof is complete. 
\end{proof}

Theorem \ref{theorem:convergence} shows that all solutions will be safe and also converge to either $x_d$ or to the set of points $\mathcal{E}$ on the boundary. These are equilibrium points from which convergence to the target is not possible. Note that, since $\partial\mathcal{X}$ is Lebesgue measure zero  in $\mathbb{R}^n$, the subset $\mathcal{E}\subset\partial\mathcal{X}$ is also of Lebesgue measure zero.  It remains to study (depending on the given worskpace $\mathcal{X}$) the invariance properties of these equilibrium points (stable, unstable) and whether they are isolated or not. 
However, studying the invariance properties of the undesired equilibria  heavily depends on the considered free space $\mathcal{X}$ (e.g., topological properties). For example, if the set $\mathcal{X}$ is not path-connected, there might be initial conditions where the robot can never converge to the target $x_d$ regardless of the control law applied. In this case, the robot will converge inevitably to another equilibrium point in the set $\mathcal{E}$. Next, we concentrate our attention to the topologically simple environments; known as sphere worlds.
\subsection{Euclidean Sphere Worlds}
Let us consider a sphere world as defined in \cite{Koditschek1990RobotBoundary}. In other words, we assume that $\mathcal{X}$ consists of one large ball $\mathcal{O}_0:=\overline{\mathcal{B}(c_0,r_0)}, c_0:=0,$ which bounds the workspace minus $M$ smaller disjoint  balls $\mathcal{O}_i:=\mathcal{B}(c_i,r_i), i=1\cdots M,$ that define obstacles in $\mathbb{R}^n$ that are strictly contained in the interior of the workspace, i.e.,
\begin{align}\label{eq:sphere-world}
    \mathcal{X}:=\mathcal{O}_0\setminus\bigcup_{i=1}^M\mathcal{O}_i.
\end{align}
\begin{assumption}\label{assumption:disjoint}
Obstacles are separated from each other, i.e.,
\begin{align}
    \|c_i-c_j\|>r_i+r_j,\quad \forall i,j\in\{1,\cdots,M\}, i\neq j,
\end{align}
and from the boundary, i.e.,
\begin{align}
    \|c_i\|+r_i>r_0,\quad \forall i\in\{1,\cdots,M\}.
\end{align}
\end{assumption}
Since the obstacles are disjoint, the boundary of $\mathcal{X}$ is identified by the union of $(M+1)$ spheres
\begin{align}
\partial\mathcal{X}=\bigcup_{i=0}^M\partial\mathcal{O}_i=\bigcup_{i=0}^M\{x\in\mathbb{R}^n: \|x-c_i\|^2=r_i^2\}.
\end{align}
For each $x\in\{x\in\mathbb{R}^n: \|x-c_i\|^2=r_i^2\}$, the normal to the $i-$th sphere $\partial\mathcal{O}_i$ is given by
\begin{align}
\nu_0(x)&=+\frac{(x-c_0)}{\|x-c_0\|},\\
    \nu_i(x)&=-\frac{(x-c_i)}{\|x-c_i\|}, \quad i=1,\cdots,M.
\end{align}
% Therefore, thanks again to the disjointedness of the spheres, the Gauss map can be written  as
% \begin{align}
%     \nu(x)=\sum_{i=0}^M\mathbf{I}_{\partial\mathcal{O}_i}(x)\nu_i(x)\in\mathbb{S}^{n-1},
% \end{align}
% where $\mathbf{I}_{\mathcal{A}}(x)$ denotes the indicator function of the set $\mathcal{A}$.
Now we can characterize explicitly the set $\mathcal{E}$ of undesirable equilibria and its stability properties. 
\begin{theorem}[Sphere Worlds]
Consider a free space that is described by the sphere worlds $\mathcal{X}\subset\mathbb{R}^n$ as in \eqref{eq:sphere-world} under Assumption \ref{assumption:disjoint}. Consider the kinematic system \eqref{eq:dx1} under the control law \eqref{eq:kappa:projection}  with $\kappa_0(\cdot)$ as in \eqref{eq:kappa0}. Then,
\begin{enumerate}
\item The unique Filippov solution converges to the set $\{x_d\}\cup_{i=1}^M\{\bar x_i\}$ with $\bar x_i:=(1-\alpha_i)x_d+\alpha_i c_i$ and $\alpha_i:=1+r_i\|x_d-c_i\|^{-1}$. 
\item Each undesired equilibrium point $x=\bar x_i$ is unstable.
\item The desired equilibrium $x=x_d$ is locally exponentially stable and almost globally asymptotically stable.
\end{enumerate}
\end{theorem}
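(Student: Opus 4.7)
The plan is to use Theorem \ref{theorem:convergence}, which already guarantees convergence of every Filippov solution to $\{x_d\}\cup\mathcal{E}$ with $\mathcal{E}=\{x\in\partial\mathcal{X}: x=x_d+\lambda\nu(x), \lambda<0\}$, and then to analyze $\mathcal{E}$ componentwise on $\partial\mathcal{X}=\bigcup_{i=0}^M\partial\mathcal{O}_i$. For claim (1), on the outer sphere (normal $\nu_0(x)=x/r_0$) substitution and norm-taking give $\lambda=r_0-\|x_d\|>0$ because $x_d\in\mathbf{int}(\mathcal{O}_0)$, so no equilibrium lies on $\partial\mathcal{O}_0$. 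On each inner sphere (normal $\nu_i(x)=-(x-c_i)/r_i$) the defining equation rearranges to $(1+\lambda/r_i)(x-c_i)=x_d-c_i$; combining $\|x-c_i\|=r_i$ with $\|x_d-c_i\|>r_i$ (which holds because $x_d\notin\mathcal{O}_i$) forces $1+\lambda/r_i=-\|x_d-c_i\|/r_i$ (the positive root would give $\lambda>0$), from which $x=\bar x_i$ drops out.

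For claim (2), I would carry out a sliding-mode analysis of the closed-loop dynamics on $\partial\mathcal{O}_i$ near $\bar x_i$. Setting $\hat d:=(x_d-c_i)/\|x_d-c_i\|$, $D_i:=\|x_d-c_i\|$, and picking any unit $\hat w\perp\hat d$, the great-circle slice $x(\theta)=c_i+r_i(-\cos\theta\,\hat d+\sin\theta\,\hat w)$ recovers $\bar x_i$ at $\theta=0$. A direct expansion of $\Pi(\nu_i(x))\kappa_0(x)$ on this curve collapses to the scalar ODE $\dot\theta=(kD_i/r_i)\sin\theta$, exponentially unstable at $\theta=0$ with rate $kD_i/r_i>0$; since $\hat w$ was arbitrary this provides an unstable tangent direction at $\bar x_i$ along every meridian. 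To lift this to a Filippov-style instability statement, I would invoke Chetaev's theorem with the test function $\mathbf{U}(x):=\tfrac12\|\bar x_i-x_d\|^2-\tfrac12\|x-x_d\|^2$ restricted to $\partial\mathcal{O}_i$: $\mathbf{U}$ vanishes at $\bar x_i$, is positive on a punctured neighborhood (because $\bar x_i$ maximizes $\|\cdot-x_d\|$ on $\partial\mathcal{O}_i$), and its Lie derivative along the sliding dynamics is strictly positive off $\bar x_i$ by the sign computation already used in the proof of Theorem \ref{theorem:convergence}.

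For claim (3), local exponential stability of $x_d$ is inherited from Theorem \ref{theorem:convergence}, so only almost-global attractivity remains. By claim (1), it suffices to show that for each $i$ the basin $\mathcal{S}_i:=\{x_0\in\mathcal{X}: \text{the Filippov solution converges to }\bar x_i\}$ has Lebesgue measure zero. Locally around $\bar x_i$, claim (2) shows that every tangent direction of $\partial\mathcal{O}_i$ is unstable, so the only local stable directions form the 1-D line through $x_d$, $c_i$, and $\bar x_i$; pulling this 1-D germ backward through the piecewise-smooth flow ought to preserve dimension, so each $\mathcal{S}_i$ should be a countable union of smooth 1-manifolds and hence of measure zero. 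The main obstacle is precisely making this backward-propagation rigorous for the Filippov dynamics, since no classical unstable-manifold theorem applies and a generic trajectory may switch between interior flight and sliding on several obstacles before settling on the stable line. I would handle this by combining the monotone decrease of $\tfrac12\|x-x_d\|^2$ (which bounds the number of mode switches in any compact region), Assumption \ref{assumption:disjoint} (which isolates the obstacles), and a Sard/transversality argument on each smooth sliding arc.
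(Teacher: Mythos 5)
Your parts (1) and (2) follow essentially the same route as the paper. For (1) the computation is identical: the outer sphere is ruled out because the sign of $\lambda$ comes out nonnegative, and each inner sphere contributes the single antipodal point $\bar x_i$. For (2) the paper also restricts to the sliding dynamics $\dot x=-k\Pi(\nu_i(x))(x-x_d)$ on $\mathcal{S}_i:=\partial\mathcal{O}_i\cap\{x:\nu_i(x)^\top(x-x_d)\leq 0\}$ and runs a Chetaev-type argument, but with $\mathbf{W}_i(x)=\tfrac12\|x-\bar x_i\|^2$ (shown to satisfy $\dot{\mathbf{W}}_i=-k(1-\alpha_i)(x-x_d)^\top\Pi(\nu_i(x))(x-x_d)\geq 0$, strict off $\bar x_i$) rather than your $\mathbf{U}(x)=\tfrac12\|\bar x_i-x_d\|^2-\tfrac12\|x-x_d\|^2$; the two functions are interchangeable here, and your meridian reduction $\dot\theta=(kD_i/r_i)\sin\theta$ is a correct and useful concrete check (the meridian plane is indeed invariant because $x-x_d$ has no component orthogonal to $\mathrm{span}\{\hat d,\hat w\}$). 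One small point to make explicit in either version: the argument only exhibits escaping initial conditions \emph{on} $\partial\mathcal{O}_i$ arbitrarily close to $\bar x_i$, which suffices for instability.

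The genuine gap is in your part (3), and you have correctly diagnosed it yourself: instability of $\bar x_i$ together with the equilibrium set having measure zero does \emph{not} by itself imply that the basin of attraction of $\bar x_i$ has measure zero, and your proposed repair (propagating the one-dimensional stable germ backward through the piecewise-smooth Filippov flow via a Sard/transversality argument on sliding arcs) is a program, not a proof --- the words ``ought to preserve dimension'' and ``should be a countable union of smooth 1-manifolds'' are exactly where the work lies, since trajectories may concatenate interior arcs with sliding arcs on several obstacles and no off-the-shelf stable-manifold theorem covers this. You should know, however, that the paper does not do this work either: its proof of part (3) consists of the single sentence that the undesired equilibria are unstable and of Lebesgue measure zero, hence $x_d$ is attractive from almost all initial conditions. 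So you have identified a real subtlety that the paper glosses over, but you have not closed it; if you want a complete argument you would need to carry out something like your backward-propagation scheme (e.g., showing that any solution converging to $\bar x_i$ must either reach $\partial\mathcal{O}_i$ exactly at $\bar x_i$, confining its initial condition to the ray through $x_d$ and $c_i$, or enter the sliding set and then escape by your Chetaev computation), or replace the pointwise instability claim by a quantitative statement about the local stable set.
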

\begin{proof}
First, we show that the set $\mathcal{E}$ defined in Theorem \ref{theorem:convergence} contains exactly $M$ isolated points.  This set can be written as follows:
\begin{align}
    \mathcal{E}=\bigcup_{i=0}^M\mathcal{E}_i:=\bigcup_{i=0}^M\{x\in\partial\mathcal{O}_i: x=x_d+\lambda_i\nu_i(x), \lambda_i\in\mathbb{R}_{<0}\}.
\end{align}
For $i\in\{1,\cdots,M\}$, let $x\in\mathcal{E}_i$ then $x-x_d=\lambda_i\nu_i(x)$ and $\lambda_i=(x-x_d)^\top\nu_i(x)=-(x-x_d)^\top(x-c_i)/r_i<0$. Also, we have $\|x-x_d\|^2=\lambda_i(x-x_d)^\top\nu_i(x)=\lambda_i^2$. It follows that
\begin{align*}
    \|c_i-x_d\|^2&=\|c_i-x\|^2+\|x-x_d\|^2-2(x-c_i)^\top(x-x_d)\\
    &=r_i^2+\lambda_i^2+2\lambda_ir_i=(r_i+\lambda_i)^2.
\end{align*}
Hence, $\lambda_i=\pm\|c_i-x_d\|-r_i$. However, $\lambda_i=\|c_i-x_d\|-r_i$ is ruled out by the fact that $x_d\in\mathbf{int}(\mathcal{X})$ ($\|c_i-x_d\|\geq r_i$) and $\lambda$ must be negative. Thus $\lambda_i=-\|c_i-x_d\|-r_i$ and plugging this back into $x-x_d=\lambda_i\nu_i(x)=-\lambda_i(x-c_i)/r_i$ we get $(1+\lambda_i/r_i)x=x_d+(\lambda_i/r_i)c_i$ or equivalently $x=(1-\alpha_i)x_d+\alpha_i c_i$. On the other hand, for $i=0$, we get $\lambda_0=\pm\|c_0-x_d\|+r_0$ which is non-negative thanks to $x_d\in\mathbf{int}(\mathcal{X})$ ($\|x_d\|\leq r_0$) and hence $\mathcal{E}_0=\emptyset$.

To prove the instability of the isolated equilibrium $\bar x_i:=(1-\alpha_i)x_d+\alpha_i c_i$, it is sufficient to find an open ball $\{x:\|x-\bar x_i\|<\epsilon\}$ such that, for some $x_0$ arbitrary close to $\bar x_i$, the solution $x(t)$ that starts at $x_0$  must leave this ball. Let the closed set $\mathcal{P}_i:=\{x:\nu_i(x)^\top(x-x_d)\leq 0\}$ and define the  compact set $\mathcal{S}_i:=\partial\mathcal{O}_i\cap\mathcal{P}_i$.  When restricted on $\mathcal{S}_i$, $x$ evolves according to the dynamics
\begin{align}\label{eq:dx-reduced}
    \dot x= -k\Pi(\nu_i(x))(x-x_d).
\end{align}
Consider the positive define function with respect to $\bar x_i$
\begin{align}
    \mathbf{W}_i(x)=\frac{1}{2}\|x-\bar x_i\|^2.
\end{align}
% Now let us consider the time evolution of $\mathbf{W}_i$ at the boundary points $x\in\partial\mathcal{O}_i$. First, recall that the vector field when restricted on the boundary manifold is continuous and can be written as
% \begin{align}
%     \mathbf{P}(\nu(x),\kappa_0(x))=\begin{cases}
%     -k(x-x_d)\\
%     \qquad(x-c_i)^\top(x-x_d)\leq 0,\\
%     -k\Pi(r_i^{-1}(x-c))(x-x_d)\\
%     \qquad(x-c_i)^\top(x-x_d)\geq 0.
%     \end{cases}
% \end{align}
The time derivative of $\mathbf{W}_i(x)$ for all $x\in\mathcal{S}_i$ satisfies
% \begin{align}
% \nonumber   \dot{\mathbf{W}}_i(x)&=-k(x-\bar x_i)(x-x_d)\\
%                       \nonumber&=-k\alpha_i(x-c_i)^\top(x-x_d)-k(1-\alpha_i)\|x-x_d\|^2\\&
%                       \geq-k(1-\alpha_i)\|x-x_d\|^2>0.
% \end{align}
% while for all $x\in\partial\mathcal{O}_i$ and $(x-c_i)^\top(x-x_d)< 0$, one has
\begin{align}
   \nonumber\dot{\mathbf{W}}_i(x)&=-k(x-\bar x_i)\Pi(\nu_i(x))(x-x_d)\\ \nonumber&=-k(1-\alpha_i)(x-x_d)^\top\Pi(\nu_i(x))(x-x_d)\\
   \nonumber&\qquad-\alpha(x-c_i)^\top\Pi(\nu_i(x))(x-x_d)\\
   \nonumber&=-k(1-\alpha_i)(x-x_d)^\top\Pi(\nu_i(x))(x-x_d)\geq0
\end{align}
where we have used the fact that $(x-c_i)^\top\Pi(\nu_i(x))=0$ to obtain the third equality and $\Pi(\nu_i(x))$ is positive semi-definite to obtain the last inequality. Furthermore, $\dot{\mathbf{W}}_i(x)=0$ implies the existence of $\lambda_i$ such that $x=x_d+\lambda_i\nu_i(x)$. But $x\in\mathcal{S}_i$ implies that $\lambda<0$; which shows that $x\in\mathcal{E}_i=\{\bar x_i\}$. On the other hand, since $(\bar x_i-c_i)^\top(\bar x_i-x_d)=\alpha(\alpha
-1)\|c_i-x_d\|^2>0$, one has $\bar x_i\in\mathbf{int}(\mathcal{P}_i)$ and hence there exists an open ball $\{x:\|x-\bar x_i\|<\epsilon\}\subset\mathcal{P}_i$. Pick an initial condition $x_0$ in $\partial\mathcal{O}_i\cap\{x:\|x-\bar x_i\|<\epsilon\}\subset\mathcal{S}_i$ that is arbitrary close to $\bar x_i$. The solution $x(t)$ that starts at $x_0$ cannot approach $\bar x_i$ while in $\mathcal{S}_i$ since $\mathbf{W}_i(x)>0$ and $\dot{\mathbf{W}}_i(x)>0$ for all $x\in\mathcal{S}_i\setminus\{\bar x_i\}$. However, since no other equilibria is inside $\mathcal{S}_i$ except $\bar x_i$, the solution must leave the set $\partial\mathcal{O}_i\cap\{x:\|x-\bar x_i\|<\epsilon\}$ in finite time. However, it cannot leave the set through $\partial\mathcal{O}_i$ since  $\|x-c_i\|^2$ is constant along \eqref{eq:dx-reduced}. Consequently, the solution must leave the ball $\{x:\|x-\bar x_i\|<\epsilon\}$ in finite time and the equilibrium $x=\bar x_i$ is unstable. Finally, since the set of undesired equilibria $\cup_{i=1}^M\{\bar x_i\}$ is unstable and has Lebesgue measure zero in $\mathbb{R}^n$, it follows that $x=x_d$ is attractive from almost all initial conditions and thus it is almost globally asymptotically stable. 
\end{proof}
 
Geometrically speaking, the unstable equilibrium $\bar x_i$ is nothing but the antipodal point (on the boundary of the obstacle) which is diametrically opposite to $x_d$. Note that, for sphere worlds, our piecewise continuous vector field guarantees safety and convergence from almost all initial conditions without requiring unknown parameter tuning compared to the navigation functions approach \cite{Koditschek1990RobotBoundary}. The undesired equilibria can be removed by considering the hybrid approach in \cite{Berkane2019}. 

% {\color{red}Finally, when the free space is diffeomorphic to a sphere world (referred to as {\it generalized sphere worlds} \cite{Rimon1992ExactFunctions,Rimon1991ConstructionWorlds}), we can easily adapt the results of this section. Consider that our free space is defined by $\mathcal{S}=\Phi(\mathcal{X})$ where $\Phi:\mathcal{X}\to\mathcal{S}$ is a diffeomorphism and $\mathcal{S}$ is a sphere world. Then we can simply pick the control law
% \begin{align}
%     u=(d\Phi_x)^{-1}\kappa_{\mathcal{S}}(\Phi(x)),
% \end{align}
% where $d\Phi_x:\mathsf{T}_x\mathcal{X}\to\mathsf{T}_{\Phi(x)}\mathcal{S}$ is the Jacobian of $\Phi$ at $x\in\mathcal{X}$. In fact, the dynamics of the new variable $z:=\Phi(x)$ is given by $\dot z=(d\Phi_x)\dot x=(d\Phi_x)(d\Phi_x)^{-1}\kappa_{\mathcal{S}}(\Phi(x))=\kappa_\mathcal{S}(z)$ which guarantees, in view of Theorem \ref{theorem:safety} and \ref{theorem:convergence}, forward invariance of $\mathcal{S}$ and convergence to $z_d:=\Phi(x_d)$ from almost all initial conditions. This in turns implies that $x(t)\in\Phi^{-1}(\mathcal{S})=\mathcal{X}$ for all times and $x(t)$ converges to $x_d$ from almost all initial conditions (recall that the image, via a diffeomorphism, of a set with Lebesgue measure zero is also Lebesgue measure zero).}
\begin{figure}
    \centering
    \includegraphics[width=0.48\columnwidth]{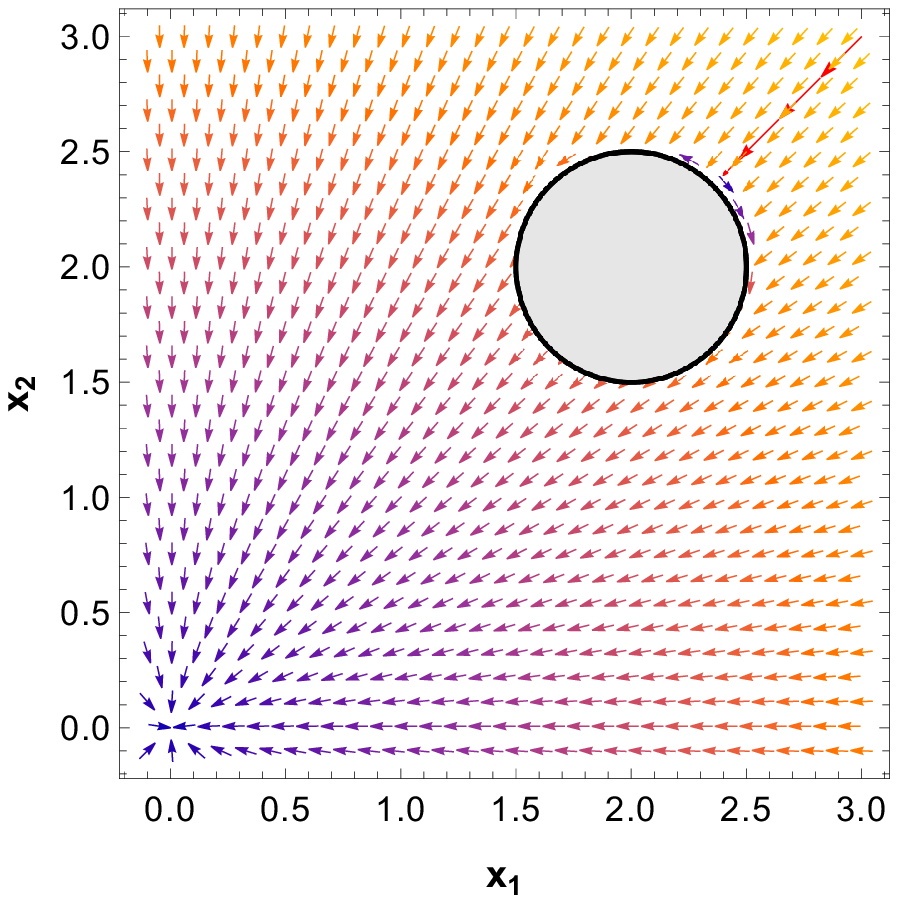}
    \includegraphics[width=0.48\columnwidth]{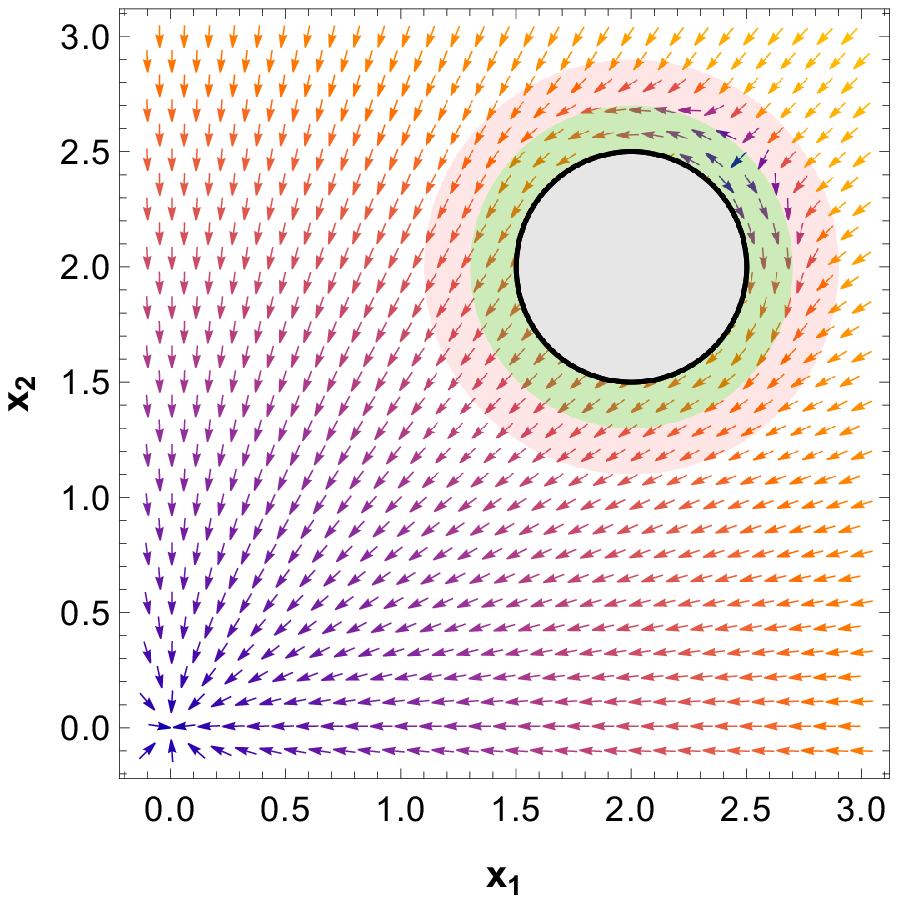}
    \caption{Left: vector fields generated by the discontinuous controller \eqref{eq:kappa:projection}, when $\mathcal{X}=\complement\mathcal{B}([2,2],0.5)$ and $x_d=0$. The convergence to $x_d$ is ensured from all initial conditions except from the stable manifold (half-line in red) of the unstable equilibrium $\bar x_1=(1+1/4\sqrt{2})[2,2]$ (antipodal point). The intensity of the vector field decreases from orange to blue colors. Right: vector fields generated by the continuous controller \eqref{eq:u:practical}. The continuous implementation allows to maintain a safety margin of size $\epsilon=0.2$ (in green) from the obstacle while the controller exhibits a smooth transition from the stabilization mode to the avoidance mode once it enters the set $\mathcal{X}_\epsilon\setminus\mathcal{X}_{\epsilon^\prime}$ (in red). }
    \label{fig:my_label}
\end{figure}

\section{Distance-based characterization and Continuous vector fields generation}\label{section:distance}
Here, we assume that the obstacle region $\complement\mathcal{X}$ has {\it positive reach}. In other words, there exists $h>0$ such that, at each $x$ with $\mathbf{d}_{\complement\mathcal{X}}(x)<h$, the projection $\mathbf{P}_{\partial\mathcal{X}}(x)$ is unique \cite[Thm 6.3, Chap. 6]{Delfour2011ShapesGeometrics}. For instance, the set in Fig.~\ref{fig:nagumo} is not positive reach since points that are arbitrary closed to the corners of the square (boundary of the workspace) can have two nearest points to the boundary. On the other hand,  the set in Fig.~\ref{fig:my_label} is positive reach. 

% Now, since $\mathcal{X}$ is closed, the boundary $\partial\mathcal{X}$ corresponds to the zero-level set of the distance function, i.e.,
% \begin{align}\label{eq:partialX}
%     \partial\mathcal{X}=\{x\in\mathcal{X}: \mathbf{d}_{\complement\mathcal{X}}(x)=0\}.
% \end{align}
In practice, it is preferable to keep the robot away from the boundary by ensuring a minimum safety margin. For this purpose, we pick a safety margin $\epsilon>0$ and define
\begin{align}
    \mathcal{X}_\epsilon:=\{x\in\mathbb{R}^n:\mathbf{d}_{\complement\mathcal{X}}(x)\geq\epsilon\}
\end{align}
as our (practical) free space. Note that $\nabla\mathbf{d}_{\complement\mathcal{X}}(x)$ represents the inward unit normal at the boundary of $\mathcal{X}_\epsilon$. Therefore, our outward normal vector $\nu(x)$ at $\partial\mathcal{X}_\epsilon$ is nothing but $\nu(x)=-\nabla\mathbf{d}_{\complement\mathcal{X}}(x)$. Since $\complement\mathcal{X}$ has positive reach, it follows from \cite[Thm 3.3, Chap. 6]{Delfour2011ShapesGeometrics} that, for all $x$ with $s\leq\mathbf{d}_{\complement\mathcal{X}}(x)\leq h$, $0<s<h$, $\nabla\mathbf{d}_{\complement\mathcal{X}}(x)$ is Lipstichtz continuous and satisfies
\begin{align}\label{eq:nabla-d}
    \nabla\mathbf{d}_{\complement\mathcal{X}}(x)=\frac{x-\mathbf{P}_{\partial\mathcal{X}}(x)}{\|x-\mathbf{P}_{\partial\mathcal{X}}(x)\|}=\frac{x-\mathbf{P}_{\partial\mathcal{X}}(x)}{\mathbf{d}_{\complement\mathcal{X}}(x)}.
\end{align}
Therefore, the discontinuous controller $u=\kappa_{\mathcal{X}_\epsilon}(x)$ can be applied and the results of Section \eqref{section:main-smooth} holds for the set $\mathcal{X}_\epsilon$. However, the discontinuity in the vector field is undesirable in practice. We propose the following continuous version that is inspired from the smoothing mechanism in \cite{Praly1991AdaptiveSystems}:
\begin{align}\label{eq:u:practical}
    \hat\kappa_{\mathcal{X}_\epsilon}(x)=\begin{cases}
    \kappa_0(x)&\mathbf{d}_{\complement\mathcal{X}}(x)>\epsilon^\prime\;\mathrm{or}\;\\
    &\kappa_0(x)^\top\nabla\mathbf{d}_{\complement\mathcal{X}}(x)\geq 0,\\
    \hat\Pi(x)\kappa_0(x)&\mathbf{d}_{\complement\mathcal{X}}(x)\leq\epsilon^\prime\;\mathrm{and}\;\\
    &\kappa_0(x)^\top\nabla\mathbf{d}_{\complement\mathcal{X}}(x)\leq 0,
    \end{cases}
\end{align}
where $0<\epsilon<\epsilon^\prime\leq h$ and 
\begin{align}
    \hat\Pi(x)&:=I_n-\phi(x)\nabla\mathbf{d}_{\complement\mathcal{X}}(x)\nabla\mathbf{d}_{\complement\mathcal{X}}(x)^\top,\\
    \phi(x)&:=\min\left(1,\frac{\epsilon^\prime-\mathbf{d}_{\complement\mathcal{X}}(x)}{\epsilon^\prime-\epsilon}\right).
\end{align}
The control law above is continuous since for $\mathbf{d}_{\complement\mathcal{X}}(x)=\epsilon^\prime$, one has $\phi(x)=0$
 and, thus, $\hat\kappa_{\mathcal{X}_\epsilon}(x)=\kappa_0(x)$. 
% Moreover, we can check that forward invariance of the set $\mathcal{X}_\epsilon$ is guaranteed. In fact, at the boundary points $\mathbf{d}_{\complement\mathcal{X}}(x)=\epsilon$, we have $\hat\Pi(\hat\nu(x))=\Pi(\hat\nu(x))$ and
% \begin{align}\nonumber
%     \dot{\mathbf{d}}_{\complement\mathcal{X}}(x)&=\mathbf{d}_{\complement\mathcal{X}}(x)^\top u\\
%     &=\left\{\begin{aligned}
%     &-\nu(x)^\top\kappa_0(x)&\nu(x)^\top\kappa_0(x)\leq 0\\
%     &0&\nu(x)^\top\kappa_0(x)>0
%     \end{aligned}\right\}\geq 0.
% \end{align}
\begin{figure}[h!]
    \centering
    \includegraphics[width=0.9\columnwidth]{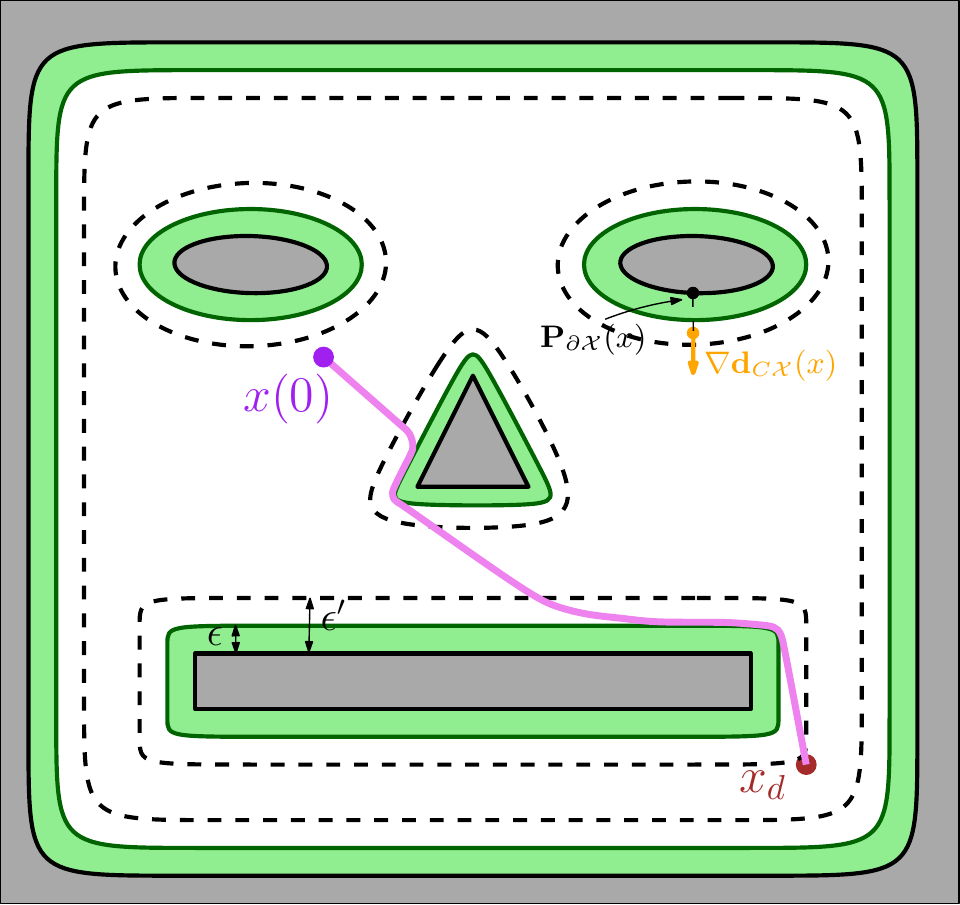}
    \caption{The continuous controller \eqref{eq:u:practical} depends explicitly on the distance to the obstacle set $\complement\mathcal{X}$ (dark gray) and the bearing vector $\nabla\mathbf{d}_{\complement\mathcal{X}}(x)$; both can be obtained from on-board sensor measurements such as range scanners. The resulting smooth navigation trajectories (purple) maintains a safety margin  of size $\epsilon$ (green) from the obstacles. The avoidance maneuver starts at a distance of $\epsilon^\prime>\epsilon$.}
    \label{fig:my_label}
\end{figure}

\section{Sensor-based implementation}\label{section:practical}
The advantage of the continuous control implementation in \eqref{eq:u:practical} is its characterization using the distance function $\mathbf{d}_{\complement\mathcal{X}}(x)$ and the projection $\mathbf{P}_{\partial\mathcal{X}}(x)$. Fortunately, both $\mathbf{d}_{\complement\mathcal{X}}(x)$ and $\mathbf{P}_{\partial\mathcal{X}}(x)$ can be obtained locally in  real-time using simple on-board sensors such as LiDARs or stereo cameras. In this section, we provide some insights about the practical implementation of our proposed navigation strategy using 2D and 3D sensing models.
\subsection{2D navigation using a LiDAR range scanner}
In a 2D setting, we assume available a LiDAR (light detection and ranging) range scanner  with an angular scanning range of $360^o$ and a fixed radial range of $R>0$. At each position $x\in\mathcal{X}$, the sensory measurement of the LiDAR scanner can be modelled by a polar curve $\rho(\cdot;x):[-\pi,\pi)\to[0,R]$, which defined as
\begin{align}\label{eq:rho}
   \rho(\theta;x):=\min\left(R,\min_{\substack{y\in\partial\mathcal{X}\\\atantwo(y-x)=\theta}}\|y-x\|\right). 
\end{align}
Therefore, it is possible to obtain the distance to the boundary when the latter is less or equal to $R$ from
\begin{align}\label{eq:b:2D}
    \mathbf{d}_{\complement\mathcal{X}}(x)=\min_{\theta}\rho(\theta;x).
\end{align}
Hence, we can implement the navigation controller \eqref{eq:u:practical} by selecting further restricting $\epsilon^\prime< R$. Moreover, the bearing vector $\nabla\mathbf{d}_{\complement\mathcal{X}}(x)$ can be obtained from
\begin{align}\label{eq:v:2D}
    \nabla\mathbf{d}_{\complement\mathcal{X}}(x)=-(\cos(\theta^*),\sin(\theta^*)),\quad\theta^*=\mathrm{arg}\min_{\theta}\rho(\theta;x).
\end{align}
Consequently, the navigation controller \eqref{eq:u:practical} has been fully characterized using a LiDAR scanner measurement model. 
\begin{figure}
    \centering
    \includegraphics[width=0.45\columnwidth]{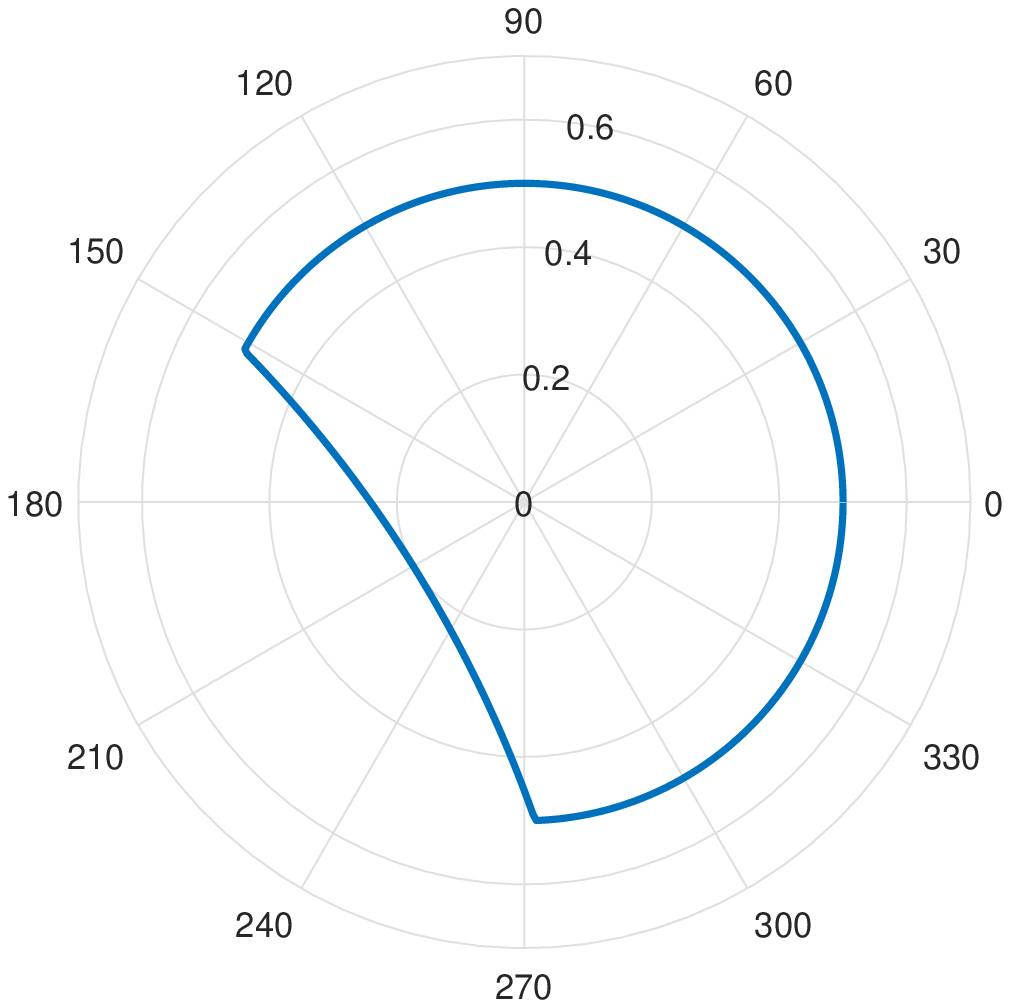}
    \includegraphics[width=0.45\columnwidth]{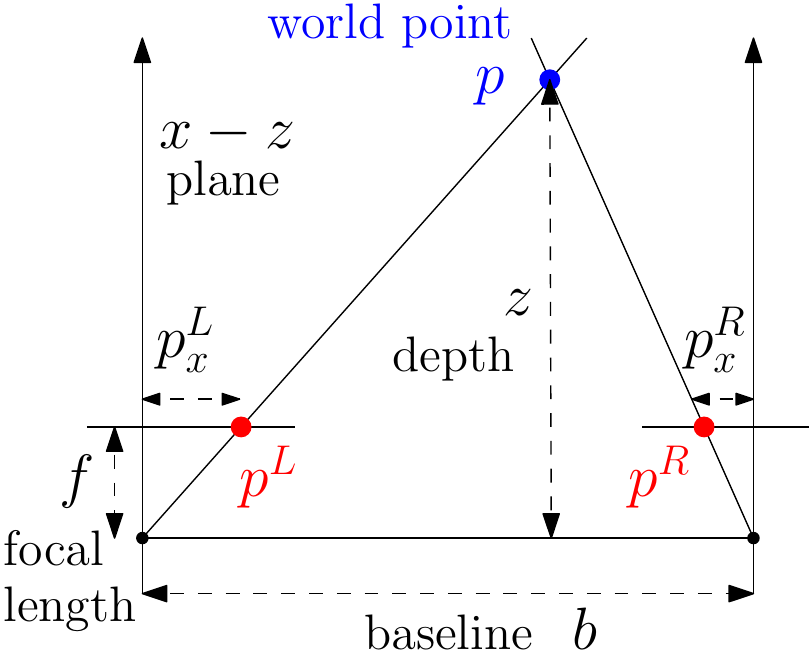}
    \caption{Left: example of 2D LiDAR polar curve in \eqref{eq:rho} that allows to extract the minimum distance and bearing vector for navigation. Right: computation of depth (3D range) from the image points of a stereo camera.}
    \label{fig:sensors}
\end{figure}
\subsection{3D navigation using a depth map from a stereo camera}
In a 3D scenario, a stereo camera allows to extract a local depth map that can be used to implement our navigation controller. For stereo cameras with parallel optical axes, focal length $f$, baseline $b$, corresponding image points $p^L:=(p_x^L,p_y^L)$ and $p^R:=(p_x^R,p_y^R)$ in the left and right cameras respectively, the depth can be calculated from (see Fig.~\ref{fig:sensors})
\begin{align}
    z=\frac{f\cdot b}{|p_x^L-p_x^R|}.
\end{align}
Since the cameras are assumed parallel and calibrated, we have $p_y^L=p_y^R$. The distance $\mathbf{d}_{\complement\mathcal{X}}(x)$ to the boundary can be hence derived by minimizing the depth $z$ across all pixels $(p_x^L,p_y^L)$ of the camera's image (the darkest pixel on the depth map). Let $(\bar p_x^L,\bar p_y^L)$ be an argument of this minimization.  If we assume that the center of the frame is taken at the left camera, the 3D Cartesian coordinates $p:=(p_x,p_y,p_z)$ of the closest boundary point $p$  expressed in the frame that is generated by the front image plane are
\begin{align}
    p_{x(y)}=\frac{b\cdot\bar p_{x(y)}^L}{(\bar p_x^L-\bar p_x^R)}, \quad p_z=\frac{b\cdot f}{(\bar p_x^L-\bar p_x^R)}.
\end{align}
If we let $(R_L,p_L)$ be the homogeneous coordinates of the left camera with respect to robot's center of gravity at $x$, the bearing vector from $x$ to the closest point $p$ on the boundary is given by
$
   \nabla\mathbf{d}_{\complement\mathcal{X}}(x)=(R_Lp+p_L)/\|R_Lp+p_L\|.
$
Consequently, the navigation controller \eqref{eq:u:practical} has been fully characterized using a stereo camera sensing model.
\section{Numerical simulations}\label{section:simulation}
We consider a free space $\mathcal{X}\subseteq\mathbb{R}^2$ such that:
\begin{multline}
    \mathcal{X}:=\{x=(x_1,x_2)\in\mathbb{R}^2:     f_i(x)\leq 0, i\in\{1,2\},\\
   \mathrm{and}\;g_q(x)\leq 0, q\in\mathbb{Z}
    \},
\end{multline}
where $f_1(x)=4\sin(x_1)-x_2-5, f_2(x)=4\sin(x_1)+x_2-5,$ and $g_q(x)=4-(x_1-(4q+3)\pi/2)^2-x_2^2$. The free space $\mathcal{X}$ consists of the closed (but unbounded) area between two sinusoidal-shaped boundaries minus an infinite number of spherical obstacles centered at $((4q+3)\pi/2,0), q\in\mathbb{Z},$ with radius equals $2$. To simulate the function of the range scanner (at $1^o$ resolution), we consider finding the solutions $(\lambda_\theta^1,\lambda_\theta^2,\beta_\theta^1,\beta_\theta^2,\cdots)$ of the single-variable nonlinear equations
$f_1(x+\lambda_\theta^1 v_\theta)=0, f_2(x+\lambda_\theta^2 v_\theta)=0,$ and $g_q(x+\beta_\theta^q v_\theta)=0$, for each position $x$ and direction $v_\theta=(\cos(\theta),\sin(\theta)), \theta=0^o,1^o,\cdots,359^o$. Note that the interval of search for the solution is limited to $[0,R]$ where $R$ is the radial range. If no solution is found in this range then we take the corresponding scalar as $R$. The value of the polar curve at this position and angle is given by $\rho(\theta;x)=\min\{\lambda_\theta^1,\lambda_\theta^2,\beta_\theta^1,\cdots\}$. The distance to the boundary and the gradient vector are computed according to \eqref{eq:b:2D} and \eqref{eq:v:2D}, respectively. For simulation, we pick the desired reference at $x_d=(-9,3)$, the controller parameters as $k=0.5, \epsilon=0.2, \epsilon^\prime=0.4$, and the sensory radius as $R=0.5$. We consider different initial conditions for our robot. Fig.~\ref{fig:simulation} shows the obtained (continuously differentiable) safe navigation trajectories.
\begin{figure}
    \centering
    \includegraphics[width=\columnwidth]{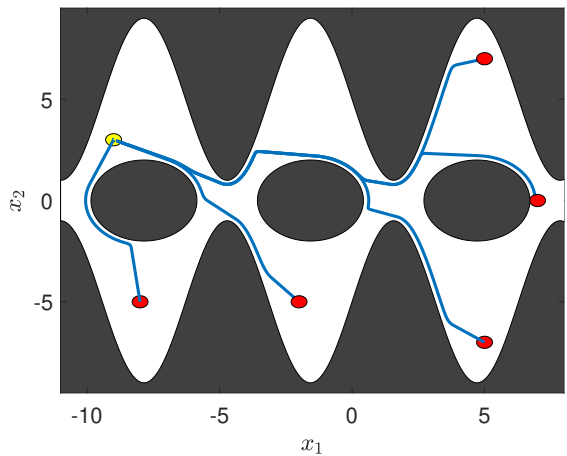}
    \caption{The obtained (continuously differentiable and piecewise smooth) navigation trajectories of the proposed sensor-based control law in \eqref{eq:u:practical}, starting at a set of initial positions (red), converge to the goal (yellow) while avoiding the obstacles region $\mathbb{R}^2\setminus\mathcal{X}$ (dark gray).  A simulation video is provided at \url{https://youtu.be/37ImfGoPyqg}.}
    \label{fig:simulation}
\end{figure}
\section{Conclusion}\label{section:conclusion}
We proposed a computationally simple sensor-based navigation controller that allows for a robot to safely move in an $n-$dimensional unknown environment. The controller switches between two modes of navigation: stabilization and avoidance. The stabilization mode uses a nominal control law and is activated away from the boundary while the avoidance mode projects the nominal control law onto the SVCs to ensure a minimally invasive safe controller. For metrically simple free spaces (sphere worlds), the navigation is guaranteed from almost everywhere.  Moreover, we proposed a smoothing mechanism that removes the discontinuity in the controller by characterizing our free space using the distance to the obstacles function. This characterization allows also to implement our proposed navigation strategy using on-board sensors without any prior knowledge of the environment. As a future work, we aim to consider more complex robot dynamics in the design of the control law.
\bibliographystyle{IEEEtran}
\bibliography{references.bib}
\end{document}